\newtheorem{theorem}{Theorem}[section]
\newtheorem{lemma}[theorem]{Lemma}
\theoremstyle{definition}
\newtheorem{definition}[theorem]{Definition}
\theoremstyle{remark}
\newtheorem{remark}[theorem]{Remark}
\numberwithin{equation}{section}
\begin{document}

\setcounter{page}{1}

\title[Nonlinear Sobolev type equations  on the Heisenberg Group]{Instantaneous blow up solutions for nonlinear Sobolev type equations  on the Heisenberg Group}

\author[M. B. Borikhanov, M. Ruzhansky and B. T. Torebek]{Meiirkhan B. Borikhanov, Michael Ruzhansky and  Berikbol T. Torebek}

\address{{Meiirkhan B. Borikhanov \newline Khoja Akhmet Yassawi International Kazakh--Turkish University \newline Sattarkhanov ave., 29, 161200 Turkistan, Kazakhstan 
\newline and
\newline Institute of
Mathematics and Mathematical Modeling \newline 125 Pushkin str.,
050010 Almaty, Kazakhstan}}
\email{meiirkhan.borikhanov@ayu.edu.kz}
\address{{Michael Ruzhansky  \newline Department of Mathematics: Analysis, Logic and Discrete Mathematics \newline Ghent University, Belgium 
\newline and
\newline School of Mathematical Sciences \newline Queen Mary University of London \newline United Kingdom}}
\email{michael.ruzhansky@ugent.be}
\address{{Berikbol T. Torebek \newline Institute of
Mathematics and Mathematical Modeling \newline 125 Pushkin str.,
050010 Almaty, Kazakhstan \newline and \newline Department of Mathematics: Analysis, Logic and Discrete Mathematics \newline
Ghent University, Belgium}}
\email{{berikbol.torebek@ugent.be}}


\subjclass[2020]{35R03, 35B33, 35B44.}

\keywords{nonlinear Sobolev type equations, instantaneous blow-up, Heisenberg group.}

\begin{abstract}In this paper, we study the nonlinear Sobolev type equations on the Heisenberg group. We show that the problems do not admit nontrivial local weak solutions, i.e. “instantaneous blow up" occurs, using the nonlinear capacity method. Namely, by choosing suitable test functions, we will prove an instantaneous blow up for any initial conditions $u_0,\,u_1\in L^q(\mathbb{H}^n)$ with $q\leq \frac{Q}{Q-2}$.  \end{abstract}
\maketitle 

\section{Introduction}

We consider the following nonlinear Sobolev type equations on the Heisenberg group
\begin{equation}\label{1}
\left\{\begin{array}{l}\large\displaystyle
\frac{\partial}{\partial t}\Delta_\mathbb{H}u+\Delta_\mathbb{H}u+|u|^{q}=0,\,\ (t,\eta)\in(0,T)\times\mathbb{H}^{n},\\{}\\
u(0,\eta)=u_0(\eta),\, \eta\in\mathbb{H}^{n},\end{array}\right.\end{equation}and 
\begin{equation}\label{2}
\left\{\begin{array}{l}\large\displaystyle
\frac{\partial^2}{\partial t^2}\Delta_\mathbb{H}u+\Delta_\mathbb{H}u+|u|^{q}=0,\,\ (t,\eta)\in(0,T)\times\mathbb{H}^{n},\\{}\\
u(0,\eta)=u_0(\eta),\,\frac{\partial}{\partial t}u(0,\eta)=u_1(\eta),\, \eta\in\mathbb{H}^{n},\end{array}\right.\end{equation}where $q>1$ and $\Delta_\mathbb{H}$ is the sub–Laplacian operator on the Heisenberg group $\mathbb{H}^{n}$. 

Many authors have studied the issue of instantaneous blow-up. Let us start by noting some well-known problems related to \eqref{1} and \eqref{2}.

Firstly, Brezis and Cabr\'{e} in \cite{Brezis1} investigated the  nonexistence of local solutions to singular nonlinear parabolic and elliptic equations, respectively
\begin{equation}\label{5}\left\{\begin{array}{l}
u_t-\Delta u\geq|x|^{-2}u^2\,\,\,\,\text{in}\,\,\,(0,\infty)\times(\Omega\setminus\{0\}),  \\{}\\
u(0,x)=u_0(x) \,\,\,\,\text{in}\,\,\,\Omega, \end{array}\right.\end{equation}and
\begin{equation}\label{4}\left\{\begin{array}{l}
-\Delta u\geq|x|^{-2}u^2\,\,\,\,\text{in}\,\,\,\Omega\setminus\{0\},  \\{}\\
u=0 \,\,\text{on}\,\,\,\partial\Omega, \end{array}\right.\end{equation}
where $\Omega\subset \mathbb{R}^N$  is a smoothly bounded domain.

Later, in \cite{Weissler, Brezis} the authors considered the question of local existence for a semilinear heat equation
\begin{equation}\label{3}\left\{\begin{array}{l}
{{u}_{t}}-\Delta u= \vert u \vert^{p-1}u\,\,\,\,\,\text{in}\,\,\,(0,T)\times\Omega,  \\{}\\
u=0 \,\,\,\,\text{on}\,\,\,(0,T)\times\partial\Omega,\\{}\\
u\left(0,x\right)=u_0\left( x \right)\,\,\,\, \text{in}\,\,\,\, \Omega, \end{array}\right.\end{equation}
where $\Omega\subset \mathbb{R}^N$  is a smoothly bounded domain and $p>1$.

Then, for the linear parabolic equation with singular potential
\begin{equation}\label{30}\left\{\begin{array}{l}
{{u}_{t}}-\Delta u= a(x)u\,\,\,\,\,\text{in}\,\,\,(0,T)\times\Omega,  \\{}\\
u=0 \,\,\,\,\text{on}\,\,\,(0,T)\times\partial\Omega,\\{}\\
u\left(0,x\right)=u_0\left( x \right)\,\,\,\, \text{in}\,\,\,\, \Omega, \end{array}\right.\end{equation}
where $\Omega\subset \mathbb{R}^N$  is a smooth bounded domain and $a(x)\geq0$, the “instantaneous blow up" phenomenon was obtained in \cite{Cabre}.

It is important to note that the comparison method has been used to establish the absence of solutions in the above-mentioned relevant references. On the other hand, standard comparison principles tend to fail for higher-order operators with respect to time. For this reason, many authors have used the nonlinear capacity method to overcome this difficulty (see \cite{Mitidieri} and references therein). This method can be used to obtain results under sufficient conditions for the nonexistence of solutions of higher-order equations in time. In particular, it was used in \cite{Galakhov, Galakhov1} to investigate the instantaneous blow-up for a variety of nonlinear evolution equations, including higher order quasilinear evolution partial differential equations and inequalities with singular coefficients.

In \cite{Coleman}, the instantaneous blow-up in nonclassical Sobolev equations on a strip
\begin{equation}\label{40}\left\{\begin{array}{l}
u_t=u_{xx}-u_{xtx},\\{}\\
u(0,x)=u_0(x),\,\,u(t,0)=u(t,l)=0,\,\,l>0, \end{array}\right.\end{equation} was discovered for the first time.
The main result about nonexistence of a bounded solution to this problem for an infinitely small period of time was established under the condition  $l\in(0,\pi]$.

Moreover, for several classes of nonlinear Sobolev type equations, including problems \eqref{1} and \eqref{2} on $\mathbb{R}^n$, the instantaneous blow up was studied by Korpusov in \cite{Korpusov}. Namely, it was shown that, if $u_0, u_1\in L^q(\mathbb{R}^n)$ and 
\begin{equation}\label{7}
1<q\leq q_c:=\left\{\begin{array}{l}
+\infty,\,\,\,\,\,\,\text{if}\,\,\,\,\,n=1,2,\\{}\\
{\large\displaystyle\frac{n}{n-2}},\,\,\,\text{if}\,\,\,\,n\geq3,\end{array}\right.\end{equation}
then the problem \eqref{1} and \eqref{2} admits no nontrivial local weak solution. In addition, we have to note that the author studied instantaneous blow up solutions in  \cite{Korpusov1}-\cite{Korpusov3}.

 Recently, Jleli et al. investigated the absence of local weak solutions for the problem \eqref{2} with potentials defined on complete noncompact Riemannian manifolds  \cite{Jleli}, and  some differential inequalities of Sobolev type in an exterior domain \cite{Jleli1}.

In general, the Sobolev type problems such as \eqref{1}-\eqref{2} have been widely studied on $\mathbb{R}^N$ in \cite{Jleli1,Korpusov,Korpusov1} and  on complete noncompact Riemannian manifolds in  \cite{Jleli}.

We also note that the Fujita-type exponents to semilinear evolution equations  on the Lie groups were studied in \cite{Kassymov, Pascucci, Pohozaev, Zhang}, and on sub-Rimannian manifolds and general unimodular Lie groups in \cite{Ruzhansky1}.

Motivated by the above works, the main objective of this paper is to obtain the instantaneous blow up phenomenon
on the Heisenberg group $\mathbb{H}^n,\,n\geq 1$.

Our main results are the following.

\begin{theorem}\label{T1}
Let $u_0\in L^q(\mathbb{H}^n)$. If $$\displaystyle 1<q\leq q_c=\frac{Q}{Q-2},$$ then the problem \eqref{1} has no local weak solution for any $T>0$, i.e., there is an instantaneous blow-up of the local weak solution of the Cauchy problem \eqref{1}.  
\end{theorem}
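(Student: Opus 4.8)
The plan is to argue by contradiction with the nonlinear capacity method. Assume \eqref{1} admits a nontrivial local weak solution $u$ on $(0,T)\times\mathbb{H}^n$ for some $T>0$. The starting point is the weak formulation obtained by pairing \eqref{1} with a test function $\varphi$ and transferring all derivatives onto $\varphi$ via the self-adjointness of $\Delta_{\mathbb{H}}$ and one integration by parts in $t$; for $\varphi$ smooth, compactly supported in $\eta$, and with $\varphi(T,\cdot)=0$ it reads
\begin{equation*}
\int_0^T\!\!\int_{\mathbb{H}^n}|u|^q\varphi\,d\eta\,dt=\int_{\mathbb{H}^n}u_0\,\Delta_{\mathbb{H}}\varphi(0,\cdot)\,d\eta+\int_0^T\!\!\int_{\mathbb{H}^n}u\,\Delta_{\mathbb{H}}\partial_t\varphi\,d\eta\,dt-\int_0^T\!\!\int_{\mathbb{H}^n}u\,\Delta_{\mathbb{H}}\varphi\,d\eta\,dt.
\end{equation*}
The initial datum enters only through the pairing $\int_{\mathbb{H}^n}u_0\,\Delta_{\mathbb{H}}\varphi(0,\cdot)\,d\eta$, so the hypothesis $u_0\in L^q(\mathbb{H}^n)$ suffices to give it meaning even though $\Delta_{\mathbb{H}}u_0$ is only a distribution.

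I would then specialize $\varphi$ to a separated, rescaled cutoff $\varphi(t,\eta)=\theta(t)^{\ell}\,\Phi_R(\eta)^{\ell}$, where $\Phi_R(\eta)=\Phi(\delta_{1/R}\eta)$, $\delta_{\lambda}$ are the anisotropic dilations on $\mathbb{H}^n$, $\Phi$ is a fixed smooth function of the Kor\'anyi norm equal to $1$ on the unit ball and supported in the ball of radius $2$, $\theta\in C^\infty([0,T])$ satisfies $\theta(0)=1$ and $\theta(T)=0$, and $\ell$ is taken large enough (in terms of $q'=q/(q-1)$) that the capacity ratios below stay bounded. Estimating the two terms containing $u$ by Young's inequality and absorbing the arising $\varepsilon\int_0^T\!\int_{\mathbb{H}^n}|u|^q\varphi$ into the left-hand side gives
\begin{equation*}
\int_0^T\!\!\int_{\mathbb{H}^n}|u|^q\varphi\,d\eta\,dt\le C\Big|\int_{\mathbb{H}^n}u_0\,\Delta_{\mathbb{H}}\Phi_R^{\ell}\,d\eta\Big|+C\,\mathcal{B}(T)\,\mathcal{A}(R)+C\,T\,\mathcal{A}(R),
\end{equation*}
where $\mathcal{A}(R)=\int_{\mathbb{H}^n}|\Delta_{\mathbb{H}}\Phi_R^{\ell}|^{q'}(\Phi_R^{\ell})^{1-q'}\,d\eta$ and $\mathcal{B}(T)=\int_0^T|\theta'|^{q'}\theta^{\ell-q'}\,dt$. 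Using that $\Delta_{\mathbb{H}}$ is homogeneous of degree $2$ and that the Haar measure scales by $R^{Q}$ under $\delta_R$, the key computation is $\mathcal{A}(R)\sim R^{Q-2q'}$, with $\Delta_{\mathbb{H}}\Phi_R^{\ell}$ supported on the annulus $A_R=\{R\le|\eta|\le 2R\}$, while $\mathcal{B}(T)\sim T^{1-q'}$ is a finite constant for fixed $T$.

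The decisive point is that $Q-2q'<0$ exactly when $q<q_c=Q/(Q-2)$ and $Q-2q'=0$ when $q=q_c$. I would therefore keep $T>0$ fixed — this is what makes the blow-up \emph{instantaneous}, since the argument applies to every, in particular arbitrarily small, $T$ — and send $R\to\infty$. In the subcritical range $1<q<q_c$ every term on the right vanishes: $\mathcal{A}(R)\to0$ directly, and $\big|\int_{\mathbb{H}^n}u_0\,\Delta_{\mathbb{H}}\Phi_R^{\ell}\big|\le\|u_0\|_{L^q}\big(\int_{A_R}|\Delta_{\mathbb{H}}\Phi_R^{\ell}|^{q'}\big)^{1/q'}\sim\|u_0\|_{L^q}R^{(Q-2q')/q'}\to0$ by H\"older. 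Since $\Phi_R^{\ell}\nearrow1$ pointwise, Fatou's lemma on the left then forces $\int_0^T\theta^{\ell}\!\int_{\mathbb{H}^n}|u|^q\,d\eta\,dt\le0$, hence $u\equiv0$ on $(0,T)\times\mathbb{H}^n$. Substituting $u\equiv0$ back into the weak formulation yields $\int_{\mathbb{H}^n}u_0\,\Delta_{\mathbb{H}}\varphi(0,\cdot)=0$ for all admissible $\varphi$, i.e.\ $u_0=0$, contradicting nontriviality.

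I expect the critical case $q=q_c$ to be the main obstacle, because there $\mathcal{A}(R)=O(1)$ and the right-hand side does not vanish outright. I would treat it in two steps. First, the inequality above still provides the a priori bound $\int_0^T\theta^{\ell}\!\int_{\mathbb{H}^n}|u|^q\,d\eta\,dt\le C<\infty$, so the nonlinearity is globally integrable against the weight. Second, I would repeat the H\"older estimates while keeping every integral localized to the annular support $A_R$ of $\Delta_{\mathbb{H}}\Phi_R^{\ell}$, so that each right-hand term carries an extra factor $\big(\int_0^T\!\int_{A_R}|u|^q\varphi\big)^{1/q}$ or $\big(\int_{A_R}|u_0|^q\big)^{1/q}$ multiplied by the bounded capacity. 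As $A_R$ escapes to spatial infinity and both $|u|^q$ and $|u_0|^q$ are now known to be integrable, these annular tails tend to $0$ by dominated convergence, and the same contradiction closes the critical case. The remaining steps are routine: confirming $\mathcal{A}(R)\sim R^{Q-2q'}$ from the dilation structure and fixing $\ell\ge 2q'$ so that $|\Delta_{\mathbb{H}}\Phi_R^{\ell}|^{q'}(\Phi_R^{\ell})^{1-q'}$ remains bounded near the zero set of $\Phi_R$.
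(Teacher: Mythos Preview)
Your proposal is correct. The subcritical case $q<Q/(Q-2)$ is essentially identical to the paper's argument: both use a separated rescaled cutoff, H\"older/Young, the scaling $\mathcal{A}(R)\sim R^{Q-2q'}$, and let $R\to\infty$ with $T$ fixed.

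The critical case $q=Q/(Q-2)$ is where the two arguments genuinely differ. The paper does \emph{not} use the two-pass ``a priori bound plus annular tail'' trick you propose; instead it switches to a second test function with logarithmic scaling,
\[
\psi_2(\eta)=\Psi^\kappa\!\left(\frac{\ln(|\eta|_{\mathbb{H}}/\sqrt{R})}{\ln\sqrt{R}}\right),
\]
and computes (their Lemma~\ref{LT3}) that the corresponding capacity integrals decay like $(\ln R)^{-Q/2}$, so the right-hand side vanishes outright as $R\to\infty$ in a single step. Your route keeps a single cutoff family throughout and exploits that in the critical case the capacity is merely bounded, first extracting global integrability of $|u|^q\theta^\ell$ and of $|u_0|^q$, then re-running H\"older with the $u$-integrals restricted to the escaping annulus $A_R$ so that dominated convergence kills every term. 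Your approach is lighter in that it avoids constructing and estimating the sub-Laplacian of a second (logarithmic) cutoff; the paper's approach is lighter in that it is a one-pass argument once the auxiliary lemma is in place. Both are standard in the nonlinear capacity literature and yield the same conclusion.
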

\begin{theorem}\label{T2}
Let $u_0\in L^q(\mathbb{H}^n)$ and $u_1\in L^q(\mathbb{H}^n).$ If $$\displaystyle 1<q\leq q_c=\frac{Q}{Q-2},$$ then the problem \eqref{2} admits no nontrivial local weak solution for any $T>0$, i.e., there is an instantaneous blow-up of the local weak solution of the Cauchy problem \eqref{2}. 
\end{theorem}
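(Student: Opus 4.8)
The plan is to prove Theorem \ref{T2} by the nonlinear capacity method, arguing by contradiction: assuming a nontrivial local weak solution $u$ on $(0,T)\times\mathbb{H}^n$ exists, I will force the weighted nonlinearity $\int_0^T\int_{\mathbb{H}^n}|u|^q\varphi\,d\eta\,dt$ to vanish. First I would fix the weak formulation by multiplying \eqref{2} by a test function $\varphi(t,\eta)=f(t)\phi_R(\eta)$, integrating over $(0,T)\times\mathbb{H}^n$, and integrating by parts twice in $t$ and twice in space, using that $\Delta_\mathbb{H}$ is formally self-adjoint and that $\phi_R$ is compactly supported (so there are no spatial boundary terms). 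Choosing $f$ with $f(T)=f'(T)=0$ kills the boundary terms at $t=T$ and leaves
$$\int_0^T\!\!\int_{\mathbb{H}^n}|u|^q f\phi_R\,d\eta\,dt=-\int_0^T\!\!\int_{\mathbb{H}^n} u\,(f''+f)\,\Delta_\mathbb{H}\phi_R\,d\eta\,dt+f(0)\!\int_{\mathbb{H}^n}\!u_1\Delta_\mathbb{H}\phi_R\,d\eta-f'(0)\!\int_{\mathbb{H}^n}\!u_0\Delta_\mathbb{H}\phi_R\,d\eta,$$
in which the data $u_0,u_1\in L^q$ enter only through the initial boundary terms.

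For the spatial cutoff I would take $\phi_R(\eta)=\Phi(\rho(\eta)/R)^\ell$, where $\rho$ is the Kor\'anyi gauge, $\Phi\equiv1$ on $[0,1]$ with $\mathrm{supp}\,\Phi\subset[0,2]$, and $\ell$ is large enough that the capacity weights below are integrable, while $f(t)=(1-t/T)_{+}^{\mu}$ with $\mu$ large. Applying H\"older and then Young's inequality to the first term on the right transfers a factor $\big(\int_0^T\int|u|^qf\phi_R\big)^{1/q}$ to the left and leaves the nonlinear capacity
$$\mathcal{K}_R=\int_0^T\!|f''+f|^{q'}f^{-q'/q}\,dt\cdot\int_{\mathbb{H}^n}|\Delta_\mathbb{H}\phi_R|^{q'}\phi_R^{-q'/q}\,d\eta, \qquad q'=\tfrac{q}{q-1}.$$
Since $\Delta_\mathbb{H}$ is homogeneous of degree $2$ with respect to the dilations of $\mathbb{H}^n$ and $\mathrm{supp}\,\Delta_\mathbb{H}\phi_R\subset\{R\le\rho\le2R\}$, whose homogeneous volume is $\sim R^Q$, the spatial factor scales like $R^{Q-2q'}$. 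The threshold $R^{Q-2q'}=R^0$ occurs precisely at $q'=Q/2$, i.e. at $q=q_c=\frac{Q}{Q-2}$, which pins down the critical exponent; for $1<q<q_c$ this factor tends to $0$ as $R\to\infty$.

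With $T$ and $f$ fixed I would then let $R\to\infty$. The two initial-data terms are estimated by $\big|\int u_i\Delta_\mathbb{H}\phi_R\big|\le\|u_i\|_{L^q(\rho\ge R)}\|\Delta_\mathbb{H}\phi_R\|_{L^{q'}}$; because $\|\Delta_\mathbb{H}\phi_R\|_{L^{q'}}\sim R^{(Q-2q')/q'}$ stays bounded for $q\le q_c$ while the tail $\|u_i\|_{L^q(\rho\ge R)}\to0$ (as $u_i\in L^q(\mathbb{H}^n)$), both terms vanish in the limit. For $1<q<q_c$ the capacity $\mathcal{K}_R\to0$ as well, so the whole right-hand side tends to $0$ and $\int_0^T\int_{\mathbb{H}^n}|u|^qf\,d\eta\,dt=0$; since $f>0$ on $(0,T)$ and $T>0$ is arbitrary, $u\equiv0$, contradicting nontriviality.

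The delicate point, and the step I expect to be the main obstacle, is the critical case $q=q_c$, where $R^{Q-2q'}=1$ and the capacity no longer decays. Here I would refine the H\"older step by restricting the $|u|^q$-integral to $\mathrm{supp}\,\Delta_\mathbb{H}\phi_R\subset\{\rho\ge R\}$, obtaining the sharper bound
$$\Big|\int_0^T\!\!\int_{\mathbb{H}^n} u\,(f''+f)\Delta_\mathbb{H}\phi_R\,d\eta\,dt\Big|\le C(T)\Big(\int_0^T\!\!\int_{\rho\ge R}|u|^q f\,d\eta\,dt\Big)^{1/q}.$$
Knowing a priori that $\int_0^T\int_{\mathbb{H}^n}|u|^qf<\infty$ (which the first, non-refined, estimate already provides), the absolute continuity of the integral forces $\int_0^T\int_{\rho\ge R}|u|^q f\to0$ as the support recedes to infinity, so this term again vanishes and the conclusion $u\equiv0$ persists at the critical exponent. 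The same scheme proves Theorem \ref{T1}, with a single integration by parts in $t$ replacing the double one and the single initial term $f(0)\int_{\mathbb{H}^n} u_0\Delta_\mathbb{H}\phi_R$ in place of the two data terms.
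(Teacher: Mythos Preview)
Your argument is correct. The subcritical case is handled exactly as in the paper: a product test function $\varphi=f(t)\phi_R(\eta)$ with $f(t)=(1-t/T)_+^{\mu}$ and a power of a radial cutoff, H\"older--Young to absorb the unknown, and the scaling $\int|\Delta_{\mathbb{H}}\phi_R|^{q'}\phi_R^{-q'/q}\lesssim R^{Q-2q'}\to0$ for $q<q_c$.

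The genuine difference is the critical case $q=q_c$. The paper introduces a \emph{second} family of test functions with a logarithmic radial profile, $\psi_2(\eta)=\Psi^\kappa\!\big(\ln(|\eta|_{\mathbb{H}}/\sqrt R)/\ln\sqrt R\big)$, and proves in Lemma~\ref{LT3} that the corresponding capacity carries the extra decay $(\ln R)^{-Q}+(\ln R)^{-Q/2}\to0$. You instead keep the original cutoff and exploit that $\mathrm{supp}\,\Delta_{\mathbb{H}}\phi_R\subset\{\rho\ge R\}$: the non-refined bound first yields $\sup_R\int_0^T\!\int|u|^q f\phi_R<\infty$, hence $\int_0^T\!\int_{\mathbb{H}^n}|u|^q f<\infty$ by monotone convergence, and then absolute continuity gives $\int_0^T\!\int_{\rho\ge R}|u|^q f\to0$, which kills the capacity term; the same support observation, combined with $u_0,u_1\in L^q(\mathbb{H}^n)$, makes the initial-data terms vanish via $\|u_i\|_{L^q(\rho\ge R)}\to0$ against the merely bounded $\|\Delta_{\mathbb{H}}\phi_R\|_{L^{q'}}$. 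Both routes are standard; yours is somewhat more economical (one test-function family, no extra lemma), while the paper's logarithmic cutoff gives an explicit quantitative rate in $R$ and avoids the two-step ``bounded, then tail $\to0$'' argument.
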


\begin{remark}
Note that the critical exponent $q_c=\frac{Q}{Q-2}$ is optimal. Indeed, if $$q>q_c=\frac{Q}{Q-2},$$ then the problem \eqref{1} and the problem \eqref{2} have stationary supersolutions, i.e., there is a function $v(\eta)>0,$ which is a positive solution to the equation (see \cite{Birindelli})
$$\Delta_\mathbb{H}v+v^{q}=0,\,\eta\in\mathbb{H}^{n},$$
for $q>q_c=\frac{Q}{Q-2}.$

Hence, for sufficiently small initial data, the function $v$ is a supersolution of problems \eqref{1} and \eqref{2}, i.e. $$u(t, \eta)\leq v(\eta),\,t\geq 0,\,\eta\in\mathbb{H}^{n}.$$ This means that the solutions to problems \eqref{1} and \eqref{2} will be bounded for all $t\geq 0$ and $\eta\in\mathbb{H}^{n}.$
\end{remark}

\section{Preliminaries}
\subsection{The Heisenberg group} In this subsection let us briefly recall the necessary notions in the setting of the Heisenberg group. For more details we refer to \cite{Fisher},\cite{Ruzhansky}.

The Heisenberg group $\mathbb{H}^{n}$, whose elements are $\eta=(x, y, \tau)$, where $x, y\in \mathbb{R}^n$ and $\tau\in \mathbb{R}$, is a two-step nilpotent Lie group $(\mathbb{R}^{2n+1}, \circ)$ with the group multiplication defined by
\begin{equation*}\label{HMF}
\eta\circ\eta'=(x+x', y+y', \tau+\tau'+2(\langle x,y'\rangle-\langle x',y\rangle)),
\end{equation*}
with $\langle \cdot,\cdot\rangle$ the usual inner product in $\mathbb{R}^{n}$.

The distance from $\eta=(x, y, \tau)\in \mathbb{H}^{n}$ to the origin is given by
\begin{equation}\label{HDO}|\eta|_{\mathbb{H}}=\left(\left(\sum_{i=1}^{n}(x_i^2+y_i^2)\right)^2+\tau^2\right)^{1/4}=\left(\left(|x|^2+|y|^2\right)^2+\tau^2\right)^{1/4}.\end{equation}

The dilation operation on the
Heisenberg group with respect to the group law is
\begin{equation*}\label{HD}
\delta_\lambda(\eta)=(\lambda x, \lambda y, \lambda^2\tau)\,\,\,\text{for}\,\,\,\lambda>0,    
\end{equation*}
whose Jacobian determinant is $\lambda^Q$, where $Q=2n+2$ is the homogeneous dimension of $\mathbb{H}^{n}$.

The Lie algebra $\mathfrak{h}$ of the left-invariant vector fields for $1\leq i\leq n$ on the Heisenberg group $\mathbb{H}^{n}$ is spanned by
\begin{equation*}\label{LIVF}\begin{split}
X_i:=\frac{\partial}{\partial x_i}+2y_i\frac{\partial}{\partial \tau},\\
Y_i:=\frac{\partial}{\partial y_i}-2x_i\frac{\partial}{\partial \tau},
\end{split}
\end{equation*}
with their (non-zero) commutator
\begin{equation*}\label{NZC}
[X_i,Y_i]=-4\frac{\partial}{\partial \tau}.    
\end{equation*}

The sub-Laplacian is defined by
\begin{equation*}\begin{split}\label{HLO}\Delta_{\mathbb{H}}&=\sum_{i=1}^n\left(X_i^2+Y_i^2\right).
\end{split}\end{equation*}

An explicit computation gives the expression
\begin{equation}\label{HLO1}\begin{split}\Delta_{\mathbb{H}}&=\sum_{i=1}^n\left(\partial^2_{x_ix_i}+\partial^2_{y_iy_i}+4y_i\partial^2_{x_i\tau}-4x_i\partial^2_{y_i\tau}+4(x_i^2+y_i^2)\partial^2_{\tau\tau}\right)\\&=\Delta_{(x,y)}+4|(x,y)|^2\partial^2_{\tau\tau}+4\sum_{i=1}^n\left(y_i\partial^2_{x_i\tau}-x_i\partial^2_{y_i\tau}\right),\end{split}\end{equation} where $\Delta_{(x,y)}$ and $|(x,y)|^2$ denote the Laplace operator and the Euclidian norm of $(x,y)$ in $\mathbb{R}^{2n}$, respectively.

Moreover, if $u(\eta)=\varphi(|\eta|_{\mathbb{H}})$, then
\begin{equation}\label{DIS}
\Delta_{\mathbb{H}}\varphi(r)=\frac{\sum_{i=1}^{n}(x_i^2+y_i^2)}{r^2}\biggl(\frac{d^2\varphi}{dr^2}+\frac{Q-1}{r}\frac{d\varphi}{dr}\biggr),    
\end{equation}where $r=|\eta|_{\mathbb{H}}$, $Q=2n+2$ is the homogeneous dimension of $\mathbb{H}^{n}$.

The operator $\Delta_{\mathbb{H}}$ satisfies the following fundamental properties:
\begin{description}
 \item[(a)] It is invariant with respect to the left multiplication in the group: for all $\eta, \eta'\in\mathbb{H}^{n}$ we have
 $$(\Delta_{\mathbb{H}, \eta'}u)(\eta\circ\eta')=(\Delta_{\mathbb{H} }u)(\eta\circ\eta').$$
\item[(b)] It is homogeneous with respect to the dilation in the group: for all $\lambda>0$ we have
 $$\Delta_{\mathbb{H}}(u(\lambda x, \lambda y, \lambda^2\tau))=\lambda^2(\Delta_{\mathbb{H}}u)(\lambda x, \lambda y, \lambda^2\tau).$$
\end{description}

Since $(-\Delta_{\mathbb{H}})$ is a self-adjoint operator, for all $u, v\in H^2(\mathbb{H}^n)$
\begin{equation}\label{IBP}
\int_{\mathbb{H}^n}(-\Delta_{\mathbb{H}}u)(\eta)v(\eta)d\eta= \int_{\mathbb{H}^n}u(\eta)(-\Delta_{\mathbb{H}}v)(\eta)d\eta.   
\end{equation}

\subsection{Test functions} In this subsection, some test functions and their properties will be considered. In addition, we shall establish some estimates involving the test functions.

We consider the test function to be such that
\begin{equation}\label{test1}
\varphi(t,\eta)=\varphi_1(t)\varphi_2(\eta)=\left(1-\frac{t}{T}\right)^\ell\Phi
\left(\frac{|\eta|_{\mathbb{H}}^2}{R^2}\right),\,\,\,t>0,\,\eta\in\mathbb{H}^n,\end{equation}where $ T,R>0, \ell
>\frac{q+1}{q-1}$ and $|\eta|_{\mathbb{H}}$ is the distance defined by \eqref{HDO}. 

The function ${\Phi\in C^2(\mathbb{R}_+)}$ is the standard cut-off function given by
\begin{equation}\label{Phi}
\Phi(z)=
 \begin{cases}
   1 &\text{if\,\,\,\, $0\leq z\leq\frac{1}{2} $},\\
   \searrow &\text{if\,\,\,\, $\frac{1}{2}<z<1$},\\
   0 &\text{if\,\,\,\, $z\geq 1$}.
 \end{cases}
\end{equation}
Furthermore, suppose that for $\displaystyle\frac{1}{q}+\frac{1}{q'}=1$ the following condition holds:
$$\int_{\mathbb{H}^n}\varphi_2^{-\frac{1}{q-1}}|\Delta_{\mathbb{H}}\varphi_2|^{q'}d\eta<+\infty.$$
\begin{lemma}\label{LT1} Let $T>0$. For sufficiently large $T, R$, we have 
\begin{equation*}\begin{split}
&\mathcal{I}_1=\int_0^T\varphi_1^{-\frac{1}{q-1}}|\varphi_1|^{q'} dt=C_1T, \\&\mathcal{I}_2=\int_0^T\varphi_1^{-\frac{1}{q-1}}|\varphi'_1|^{q'} dt=C_2T^{1-q'},    
\\&\mathcal{I}_3=\int_0^T\varphi_1^{-\frac{1}{q-1}}|\varphi''_1|^{q'} dt=C_3T^{1-2q'}
\end{split}\end{equation*}
and
\begin{equation*}\begin{split}
\mathcal{I}_4=\int_{\mathbb{H}^n}\varphi_2^{-\frac{1}{q-1}}|\Delta_{\mathbb{H}}\varphi_2|^{q'}d\eta\leq C_4R^{Q-2q'}.\end{split}\end{equation*}
\end{lemma}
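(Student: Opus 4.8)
The plan is to evaluate each of the four integrals by exploiting the product structure of the test function $\varphi = \varphi_1(t)\varphi_2(\eta)$ and the explicit scaling built into its definition. For the three temporal integrals $\mathcal{I}_1,\mathcal{I}_2,\mathcal{I}_3$, I would substitute $\varphi_1(t)=(1-t/T)^\ell$ directly. Computing $\varphi_1^{-1/(q-1)}$ gives $(1-t/T)^{-\ell/(q-1)}$, while $|\varphi_1|^{q'}=(1-t/T)^{\ell q'}$, $|\varphi_1'|^{q'}=(\ell/T)^{q'}(1-t/T)^{(\ell-1)q'}$, and $|\varphi_1''|^{q'}=(\ell(\ell-1)/T^2)^{q'}(1-t/T)^{(\ell-2)q'}$. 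In each case the integrand is a constant times a power $(1-t/T)^{\beta}$, and the substitution $s=t/T$ converts $\int_0^T(1-t/T)^\beta\,dt$ into $T\int_0^1(1-s)^\beta\,ds=T/(\beta+1)$. The explicit powers of $T$ pulled out of the derivatives then produce the claimed rates $T$, $T^{1-q'}$, and $T^{1-2q'}$ respectively, with $C_1,C_2,C_3$ the resulting finite Beta-function constants. The one thing to check is that every exponent $\beta$ satisfies $\beta>-1$ so that the elementary integrals converge; this is exactly where the hypothesis $\ell>\frac{q+1}{q-1}$ enters, and I would verify that the most dangerous exponent (from $\mathcal{I}_3$, namely $\ell q'/(q-1)$ versus $(\ell-2)q'$) stays strictly above $-1$.

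The spatial integral $\mathcal{I}_4$ is the heart of the lemma and the step I expect to be the main obstacle. Here I would argue by the change of variables induced by the anisotropic dilation $\delta_R$, whose Jacobian is $R^Q$. The key input is the homogeneity of the sub-Laplacian recorded earlier as property (b): $\Delta_{\mathbb{H}}$ scales as $\lambda^2$ under $\delta_\lambda$. Writing $\varphi_2(\eta)=\Phi(|\eta|_{\mathbb{H}}^2/R^2)$ and performing the rescaling $\eta=\delta_R(\zeta)$ (equivalently $|\eta|_{\mathbb{H}}=R|\zeta|_{\mathbb{H}}$, using that the homogeneous norm \eqref{HDO} is $\delta_R$-homogeneous of degree one), I would extract one factor of $R^Q$ from $d\eta$ and, from $|\Delta_{\mathbb{H}}\varphi_2|^{q'}$, a factor $R^{-2q'}$ since each application of $\Delta_{\mathbb{H}}$ to a function of $|\eta|_{\mathbb{H}}^2/R^2$ brings down $R^{-2}$. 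This produces the target exponent $Q-2q'$, with the remaining integral over the fixed annulus in the rescaled variable absorbed into $C_4$.

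The remaining work is to confirm that this residual constant integral is genuinely finite. After rescaling, the integrand is supported in the region $\frac{1}{2}<|\zeta|_{\mathbb{H}}^2<1$ where $\Phi$ transitions, so both $\Phi$ and $\Delta_{\mathbb{H}}\Phi$ are bounded there; the only potential singularity is from the factor $\varphi_2^{-1/(q-1)}=\Phi^{-1/(q-1)}$ as $\Phi\to 0$ near $|\zeta|_{\mathbb{H}}=1$. I would handle this exactly as in the standard capacity method: choose the cut-off profile $\Phi$ so that $\Phi^{-1/(q-1)}|\Delta_{\mathbb{H}}\Phi|^{q'}$ stays integrable (for instance by taking $\Phi(z)=\Phi_0(z)^k$ for a suitable smooth $\Phi_0$ and large power $k$, so the vanishing of $\Phi$ dominates the growth of the negative power), which is precisely the finiteness assumption imposed just before the lemma statement. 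The expression \eqref{DIS} for $\Delta_{\mathbb{H}}$ acting on radial functions makes the boundedness of $\Delta_{\mathbb{H}}\Phi$ on the annulus transparent, so the estimate $\mathcal{I}_4\le C_4R^{Q-2q'}$ follows.
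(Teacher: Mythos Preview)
Your proposal is correct and follows essentially the same route as the paper: direct computation of the time integrals using $\varphi_1(t)=(1-t/T)^\ell$ together with the convergence check $\ell>\frac{q+1}{q-1}$, and a dilation change of variables $\eta=\delta_R(\zeta)$ for $\mathcal{I}_4$ to extract the factor $R^{Q-2q'}$. The only cosmetic difference is that the paper obtains the $R^{-2}$ decay of $\Delta_{\mathbb{H}}\varphi_2$ by expanding via the radial formula \eqref{DIS} and the crude bound $|x|^2+|y|^2\le r^2$, whereas you invoke the homogeneity property (b) directly; your argument in fact yields an equality $\mathcal{I}_4=C_4 R^{Q-2q'}$ rather than merely the stated inequality.
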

\begin{proof}[Proof of Lemma \ref{LT1}.]
In view of \eqref{test1} with $\ell>\frac{q+1}{q-1},$ we arrive at
\begin{equation*}\begin{split}
&\mathcal{I}_1=\int_0^T\varphi_1^{-\frac{1}{q-1}}|\varphi_1|^{q'}dt=\int_0^T\left(1-\frac{t}{T}\right)^\ell dt=C_1T,
\\&\mathcal{I}_2=\int_0^T\varphi_1^{-\frac{1}{q-1}}|\varphi'_1|^{q'} dt=\int_0^T\left(1-\frac{t}{T}\right)^{-\frac{\ell}{q-1}}\left|\ell T^{-1}\left(1-\frac{t}{T}\right)^{\ell-1}\right|^{q'} dt=C_2T^{1-q'},    
\\&\mathcal{I}_3=\int_0^T\varphi_1^{-\frac{1}{q-1}}|\varphi''_1|^{q'} dt=\int_0^T \left(1-\frac{t}{T}\right)^{-\frac{\ell}{q-1}}\left|\frac{\ell(\ell-1)}{T^{2}}\left(1-\frac{t}{T}\right)^{\ell-2}\right|^{q'} dt=C_3T^{1-2q'},
\end{split}\end{equation*}where $C_1, C_2$ and $C_3$ are positive coefficients that are determined by
\begin{equation*}\begin{split}
 C_1=\frac{1}{\ell+1},\,\,\, C_2=\frac{(q-1)\ell^{\frac{q}{q-1}}}{\ell(q-1)-1},\,\,\,C_3=\frac{(q-1)(\ell(\ell-1))^\frac{q}{q-1}}{\ell(q-1)-q-1}. \end{split}\end{equation*}

Furthermore, because of \eqref{DIS} and \eqref{Phi}, it follows that
\begin{equation}\label{DIS1}\begin{split}
\Delta_{\mathbb{H}}\varphi_2&=\frac{|x|^2+|y|^2}{r^2}\biggl(\frac{4r^2}{R^4}\Phi''\left(\frac{r^2}{R^2}\right)+ \frac{(Q-1)2+2}{R^2}\Phi'\left(\frac{r^2}{R^2}\right)\biggr)
\\&=\frac{|x|^2+|y|^2}{r^2}\biggl(\frac{4}{R^2}\Phi''\left(\frac{r^2}{R^2}\right)+ \frac{2Q}{R^2}\Phi'\left(\frac{r^2}{R^2}\right)\biggr)
\\&\leq \frac{C}{R^{2}}\biggl(\left|\Phi''\left(\frac{r^2}{R^2}\right)\right|+\left|\Phi'\left(\frac{r^2}{R^2}\right)\right|\biggr),\end{split}\end{equation} thanks to the estimate $|{x}|^2+|{y}|^2\leq \left({\tau}^2+\left(|{x}|^2+|{y}|^2\right)^2\right)^\frac{1}{2}={r}^2$.

Hence, applying the change of variables by 
\begin{equation*}
\Tilde{x}=\frac{x}{R}, \Tilde{y}=\frac{y}{R}, \Tilde{\tau}=\frac{\tau}{R^{2}},\,\,\,\Tilde{\eta}=(\Tilde{x}, \Tilde{y}, \Tilde{\tau}), 
\end{equation*} and recalling the last estimate, we obtain
\begin{equation*}\begin{split}
\mathcal{I}_4&=\int_{\mathbb{H}^n}\varphi_2^{-\frac{1}{q-1}}|\Delta_{\mathbb{H}}\varphi_2|^{q'}d\eta
\\&\leq R^{Q-2q'}\int_{\frac{1}{2}\leq|\Tilde{\eta}|_{\mathbb{H}}^2\leq 1}\Phi^{-\frac{1}{q-1}}(\Tilde{\eta})||\Phi''(\Tilde{\eta})|+|\Phi'(\Tilde{\eta})||^{q'}d\Tilde{\eta}\\&=C_4R^{Q-2q'},  
\end{split}\end{equation*}where
$$C_4=\int_{\frac{1}{2}\leq|\Tilde{\eta}|_{\mathbb{H}}^2\leq 1}\Phi^{-\frac{1}{q-1}}(\Tilde{\eta})||\Phi''(\Tilde{\eta})|+|\Phi'(\Tilde{\eta})||^{q'}d\Tilde{\eta},$$which completes the proof.\end{proof}

Next, we consider another test function, for $0<T, R<\infty$ sufficiently large, 
\begin{equation}\label{test}
\psi(t,\eta)=\psi_1(t)\psi_2(\eta)=\left(1-\frac{t}{T}\right)^\ell\Psi^\kappa\left(\frac{\ln\left(\frac{|\eta|_{\mathbb{H}}}{\sqrt{R}}\right)}{\ln\left(\sqrt{R}\right)}\right),\,\,\,t>0,\,\eta\in\mathbb{H}^n,\end{equation} where $\ell>\frac{q+1}{q-1}$ and $\kappa>\frac{2q}{q-1}.$
\\Let the function $\Psi\in C^2(\mathbb{R})$ be a function $\Psi:\mathbb{R}\to[0,1]$, which is the standard cut-off function defined by
\begin{equation}\label{TP}
\Psi(z)=
 \begin{cases}
   1 &\text{if $-\infty<z\leq0 $},\\
   \searrow &\text{if $0<z<1$},\\
   0 &\text{if $z\geq 1$}, 
 \end{cases}\end{equation}with
  \begin{equation}\label{TPQ}
  |\Psi'(z)|\leq C,\,\,\,|\Psi''(z)|\leq C.   
 \end{equation}

\begin{lemma}\label{LT3} Let $T>0$ and $q=\displaystyle\frac{Q}{Q-2}$. For sufficiently large $T, R$, we have 
\begin{equation*}\begin{split}
&\mathcal{J}_1=\int_0^T\int_{\mathbb{H}^n}\psi^{-\frac{1}{q-1}}|\Delta_{\mathbb{H}}\psi|^{\frac{q}{q-1}} dt=C_1T\left[\left(\ln R\right)^{-Q}+\left(\ln R\right)^{-\frac{Q}{2}}\right], \\&\mathcal{J}_2=\int_0^T\int_{\mathbb{H}^n}\psi^{-\frac{1}{q-1}}|\Delta_{\mathbb{H}}\psi_t|^{\frac{q}{q-1}} dt=C_2 T^{\frac{2-Q}{2}}\left[\left(\ln R\right)^{-Q}+\left(\ln R\right)^{-\frac{Q}{2}}\right],    
\\&\mathcal{J}_3=\int_0^T\int_{\mathbb{H}^n}\psi^{-\frac{1}{q-1}}|\Delta_{\mathbb{H}}\psi_{tt}|^{\frac{q}{q-1}} dt=C_3T^{1-Q}\left[\left(\ln R\right)^{-Q}+\left(\ln R\right)^{-\frac{Q}{2}}\right].
\end{split}\end{equation*}
\end{lemma}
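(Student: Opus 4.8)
The plan is to exploit the product structure $\psi=\psi_1(t)\psi_2(\eta)$ to reduce all three quantities to a single common spatial integral times an elementary temporal one. Because $\Delta_{\mathbb{H}}$ acts only in the $\eta$ variable, $\Delta_{\mathbb{H}}\psi=\psi_1\,\Delta_{\mathbb{H}}\psi_2$, $\Delta_{\mathbb{H}}\psi_t=\psi_1'\,\Delta_{\mathbb{H}}\psi_2$ and $\Delta_{\mathbb{H}}\psi_{tt}=\psi_1''\,\Delta_{\mathbb{H}}\psi_2$, while $\psi^{-1/(q-1)}=\psi_1^{-1/(q-1)}\psi_2^{-1/(q-1)}$. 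Hence each $\mathcal{J}_k$ factorizes as a temporal integral $\int_0^T\psi_1^{-1/(q-1)}|\psi_1^{(k-1)}|^{q/(q-1)}\,dt$ times the common spatial integral $\mathcal{K}:=\int_{\mathbb{H}^n}\psi_2^{-1/(q-1)}|\Delta_{\mathbb{H}}\psi_2|^{q/(q-1)}\,d\eta$. Since the temporal factor $\psi_1=\varphi_1$ of \eqref{test} coincides with that of \eqref{test1}, these three temporal integrals are \emph{exactly} $\mathcal{I}_1,\mathcal{I}_2,\mathcal{I}_3$ of Lemma \ref{LT1}, equal to $C T$, $CT^{1-q'}$ and $CT^{1-2q'}$. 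At the critical exponent $q=\frac{Q}{Q-2}$ one has $q'=\frac{q}{q-1}=\frac{Q}{2}$, so these become $CT$, $CT^{(2-Q)/2}$ and $CT^{1-Q}$, matching the temporal prefactors in the statement; everything therefore rests on estimating $\mathcal{K}$.

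For $\mathcal{K}$ I would first compute $\Delta_{\mathbb{H}}\psi_2$ from the radial formula \eqref{DIS}. Writing $r=|\eta|_{\mathbb{H}}$ and $z=z(r)=\frac{\ln(r/\sqrt R)}{\ln\sqrt R}=\frac{2\ln r}{\ln R}-1$, the function $\psi_2=\Psi^\kappa(z)$ is supported in the shell $\sqrt R\le r\le R$ (equivalently $0\le z\le1$). With $z'(r)=\frac{2}{r\ln R}$ and $z''(r)=-\frac{2}{r^2\ln R}$, the chain rule and the cancellation in \eqref{DIS} give
\begin{equation*}
\Delta_{\mathbb{H}}\psi_2=\frac{|x|^2+|y|^2}{r^4}\left[\frac{4\,(\Psi^\kappa)''(z)}{(\ln R)^2}+\frac{2(Q-2)\,(\Psi^\kappa)'(z)}{\ln R}\right],
\end{equation*}
the primes denoting differentiation in $z$. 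Using $|x|^2+|y|^2\le r^2$ exactly as in \eqref{DIS1} and $(a+b)^{q'}\le C(a^{q'}+b^{q'})$, this yields $|\Delta_{\mathbb{H}}\psi_2|^{q'}\le C r^{-2q'}\big[(\ln R)^{-2q'}|(\Psi^\kappa)''(z)|^{q'}+(\ln R)^{-q'}|(\Psi^\kappa)'(z)|^{q'}\big]$, and it is these two derivative weights that generate the two powers of $\ln R$ in the statement.

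Inserting this bound into $\mathcal{K}$ and passing to homogeneous polar coordinates, $\int_{\mathbb{H}^n}f(|\eta|_{\mathbb{H}})\,d\eta=c_Q\int_0^\infty f(r)\,r^{Q-1}\,dr$, the critical relation $2q'=Q$ collapses the radial weight to $r^{Q-1-2q'}=r^{-1}$. The substitution $z=z(r)$ then reduces the radial integral over the shell to a bounded integral over the fixed interval $[0,1]$, leaving the $R$-independent constants $\int_0^1\Psi^{-\kappa/(q-1)}|(\Psi^\kappa)^{(j)}|^{q'}\,dz$ for $j=1,2$. Tracking the two derivative weights $(\ln R)^{-2q'}=(\ln R)^{-Q}$ and $(\ln R)^{-q'}=(\ln R)^{-Q/2}$ through this reduction gives $\mathcal{K}\le C\big[(\ln R)^{-Q}+(\ln R)^{-Q/2}\big]$, and multiplying by the temporal factors above produces the three asserted estimates for $\mathcal{J}_1,\mathcal{J}_2,\mathcal{J}_3$.

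The step I expect to be the main obstacle is precisely the finiteness of those $z$-integrals at the outer edge $z=1$, where $\Psi\searrow 0$. The most singular contribution is the $(\Psi')^2$-part of $(\Psi^\kappa)''$, whose integrand scales like $\Psi^{(\kappa-2)q'-\kappa/(q-1)}$; the exponent is nonnegative if and only if $\kappa\ge\frac{2q}{q-1}=2q'$, which is exactly the hypothesis $\kappa>\frac{2q}{q-1}$ built into the cut-off \eqref{test}. Checking this integrability (and its milder analogue for the first-derivative term) is the crux, and it is what makes the logarithmic cut-off succeed where the polynomial cut-off \eqref{test1} only produced the non-decaying value $R^{Q-2q'}=R^0$ at the critical exponent $q=\frac{Q}{Q-2}$.
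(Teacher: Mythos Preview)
Your proposal is correct and follows essentially the same route as the paper: factor $\mathcal{J}_k$ into the temporal integrals $\mathcal{I}_1,\mathcal{I}_2,\mathcal{I}_3$ of Lemma~\ref{LT1} times the common spatial piece, compute $\Delta_{\mathbb{H}}\psi_2$ from the radial formula~\eqref{DIS}, bound the angular factor $\frac{|x|^2+|y|^2}{r^2}\le 1$, split via $(a+b)^{q'}\le C(a^{q'}+b^{q'})$, and use $\kappa>\tfrac{2q}{q-1}$ so that the net $\Psi$-exponent stays nonnegative. The only cosmetic difference is that the paper expands $(\Psi^\kappa)'=\kappa\Psi^{\kappa-1}\Psi'$ and $(\Psi^\kappa)''=\kappa(\kappa-1)\Psi^{\kappa-2}(\Psi')^2+\kappa\Psi^{\kappa-1}\Psi''$ from the outset and then bounds the $\Psi$-powers by $1$ before integrating $|\eta|_{\mathbb H}^{-2q'}$ over the shell, whereas you keep $(\Psi^\kappa)^{(j)}$ bundled and change variables to $z$; the outcome is the same.

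One small remark on your last step: when you substitute $z=z(r)$ in the radial integral $\int_{\sqrt R}^{R} r^{-1}g(z(r))\,dr$, the Jacobian $dr/r=\tfrac{\ln R}{2}\,dz$ contributes an extra factor $\tfrac{\ln R}{2}$ in front of the ``$R$-independent'' $z$-integral. The paper's computation in \eqref{R1}--\eqref{R2} handles this shell integral by the cruder bound $\int r^{Q-1-2q'}\,dr$ and arrives at the powers $(\ln R)^{-Q}$ and $(\ln R)^{-Q/2}$ stated in the lemma; either way the quantity tends to $0$ as $R\to\infty$, which is all that is used downstream.
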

\begin{proof}[Proof of Lemma \ref{LT3}.] 
Thanks to the following identity
\begin{equation*}\begin{split}
\biggl[\frac{d^2}{dr^2}+\frac{Q-1}{r}\frac{d}{dr}\biggr]\psi_2
&=\frac{\kappa(\kappa-1)}{r^2\ln^2 \sqrt R }\Psi^{\kappa-2}\left(\frac{\ln\left(\frac{r}{\sqrt{R}}\right)}{\ln\left(\sqrt{R}\right)}\right)\left[\Psi'\left(\frac{\ln\left(\frac{r}{\sqrt{R}}\right)}{\ln\left(\sqrt{R}\right)}\right)\right]^2
\\&+\frac{\kappa}{r^2\ln^2 \sqrt R }\Psi^{\kappa-1}\left(\frac{\ln\left(\frac{r}{\sqrt{R}}\right)}{\ln\left(\sqrt{R}\right)}\right)\Psi''\left(\frac{\ln\left(\frac{r}{\sqrt{R}}\right)}{\ln\left(\sqrt{R}\right)}\right)
\\&+\frac{\kappa(Q-2)}{r^2\ln \sqrt R }\Psi^{\kappa-1}\left(\frac{\ln\left(\frac{r}{\sqrt{R}}\right)}{\ln\left(\sqrt{R}\right)}\right)\Psi'\left(\frac{\ln\left(\frac{r}{\sqrt{R}}\right)}{\ln\left(\sqrt{R}\right)}\right)\end{split}\end{equation*}
and noting \eqref{DIS} for the function $\Psi$ with \eqref{TPQ} we have
\begin{equation*}\begin{split}
|\Delta_{\mathbb{H}}\psi_2|&\leq\frac{(|x|^2+|y|^2)}{r^2}\frac{C}{r^2}\left[\frac{1}{\ln^2\sqrt{R}}\Psi^{\kappa-2}\left(\frac{\ln\left(\frac{r}{\sqrt{R}}\right)}{\ln\left(\sqrt{R}\right)}\right)+\frac{1}{\ln\sqrt{R}}\Psi^{\kappa-1}\left(\frac{\ln\left(\frac{r}{\sqrt{R}}\right)}{\ln\left(\sqrt{R}\right)}\right)\right]
\\&\leq \frac{C}{r^2}\left[\frac{1}{\ln^2\sqrt{R}}\Psi^{\kappa-2}\left(\frac{\ln\left(\frac{r}{\sqrt{R}}\right)}{\ln\left(\sqrt{R}\right)}\right)+\frac{1}{\ln\sqrt{R}}\Psi^{\kappa-1}\left(\frac{\ln\left(\frac{r}{\sqrt{R}}\right)}{\ln\left(\sqrt{R}\right)}\right)\right],\end{split}\end{equation*}here we have used $|{x}|^2+|{y}|^2\leq \left({\tau}^2+\left(|{x}|^2+|{y}|^2\right)^2\right)^\frac{1}{2}={r}^2$.
\\At this stage, using Schwarz inequality and
\begin{equation*}
(a+b)^m\leq 2^{m-1}(a^m+b^m),\,\,\,a\geq0,\,b\geq0,\,m=\frac{q}{q-1},\end{equation*} we have
\begin{equation}\label{L3}\begin{split}
\int_{\mathbb{H}^n}\psi_2^{-\frac{1}{q-1}}|\Delta_\mathbb{H}\psi_2|^{q'}d\eta&\leq C\int_{\mathbb{H}^n}\psi_2^{-\frac{1}{q-1}}\left[\frac{1}{|\eta|_{\mathbb{H}}^2\ln^2\sqrt{R}}\left|\psi_2^{\frac{\kappa-2}{\kappa}}\right|\right]^\frac{q}{q-1}   d\eta
\\&+C\int_{\mathbb{H}^n}\psi_2^{-\frac{1}{q-1}}\left[\frac{1}{|\eta|_{\mathbb{H}}^2\ln\sqrt{R}}\left|\psi_2^{\frac{\kappa-1}{\kappa}}\right|\right]^\frac{q}{q-1}   d\eta. \end{split}
\end{equation}
Therefore, noting that $\kappa>\frac{2q}{q-1}$ and \eqref{TP}, we have
\begin{equation*}\begin{split}
\int_{\mathbb{H}^n}\psi_2^{-\frac{1}{q-1}}|\Delta_\mathbb{H}\psi_2|^{q'}d\eta&\leq C\int_{\mathbb{H}^n}\left[\frac{1}{|\eta|_{\mathbb{H}}^2\ln^2\sqrt{R}}\right]^\frac{q}{q-1}   d\eta
\\&+C\int_{\mathbb{H}^n} \left[\frac{1}{|\eta|_{\mathbb{H}}^2\ln\sqrt{R}}\right]^\frac{q}{q-1}   d\eta. \end{split}
\end{equation*}
Since $q=\displaystyle\frac{Q}{Q-2}$, it follows that
\begin{equation}\label{R1}\begin{split}
\int_{\mathbb{H}^n}\left[\frac{1}{|\eta|_{\mathbb{H}}^2\ln^2\sqrt{R}}\right]^\frac{q}{q-1}   d\eta&=\left(\ln R\right)^{-\frac{2q}{q-1}}\int\limits_{1<|\eta|_{\mathbb{H}}<R}|\eta|_{\mathbb{H}}^{-\frac{2q}{q-1}}   d\eta
\\&\stackrel{|\eta|_{\mathbb{H}}=r}{=}\left(\ln R\right)^{-\frac{2q}{q-1}}\int\limits_{1<r<R}r^{-\frac{2q}{q-1}+Q-1}   dr
\\&\leq C\left(\ln R\right)^{-\frac{2q}{q-1}}R^{-\frac{2q}{q-1}+Q}
\\&\leq C\left(\ln R\right)^{-Q},\end{split}\end{equation} since $\frac{2q}{q-1}=\frac{\frac{2Q}{Q-2}}{\frac{2}{Q-2}}=Q.$
\\Similarly, we deduce that
\begin{equation}\label{R2}\begin{split}
\int_{\mathbb{H}^n} \left[\frac{1}{|\eta|_{\mathbb{H}}^2\ln\sqrt{R}}\right]^\frac{q}{q-1}   d\eta&\leq C\left(\ln R\right)^{-\frac{Q}{2}}. \end{split}
\end{equation}

Finally, combining the results of Lemma \ref{LT1} with \eqref{R1}-\eqref{R2}, we obtain the desired estimates for $\mathcal{J}_1, \mathcal{J}_2$ and $\mathcal{J}_3,$ which complete the proof.
\end{proof}

\section{The proof of main results}
In this section, we derive the main results of this work.
\begin{definition}\label{WS} Let $u_0\in L^q(\mathbb{H}^n).$ A function $u\in L^q_{\text{loc}}([0,T]; L^q_{\text{loc}}(\mathbb{H}^n))$ is called a local weak solution to the problem \eqref{1}, if
\begin{equation}\label{WSF} \begin{split}&\int_0^T\int_{\mathbb{H}^n} (|u|^q\varphi +u\Delta_{\mathbb{H}}\varphi-u\Delta_{\mathbb{H}}\varphi_t)d\eta dt=\int_{\mathbb{H}^n}u_0\Delta_{\mathbb{H}}\varphi(0,\eta)d\eta, \end{split}\end{equation} 
holds for any $\varphi\in C_{t,\eta}^{1,2}([0,T];\mathbb{H}^n)$ with $\text{supp}(\varphi)\subset [0,T]\times \mathbb{H}^n$, $\varphi(T,\eta)=0$. \end{definition}

\begin{definition}\label{WS1} Let $u_0,u_1\in L^q(\mathbb{H}^n).$ A function $u\in L^q_{\text{loc}}([0,T]; L^q_{\text{loc}}(\mathbb{H}^n))$ is called a local weak solution of \eqref{2}, if
\begin{equation}\label{WSF1} \begin{split}\int_0^T\int_{\mathbb{H}^n} (|u|^q\varphi +u\Delta_{\mathbb{H}}\varphi+u\Delta_{\mathbb{H}}\varphi_{tt})d\eta dt&=\int_{\mathbb{H}^n}u_1\Delta_{\mathbb{H}}\varphi(0,\eta)d\eta\\&-\int_{\mathbb{H}^n}u_0\Delta_{\mathbb{H}}\varphi_t(0,\eta)d\eta,\end{split}\end{equation} 
holds for any $\varphi\in C_{t,\eta}^{2,2}([0,T];\mathbb{H}^n)$ with $\text{supp}(\varphi)\subset [0,T]\times \mathbb{H}^n$, $\varphi(T,\eta)=\varphi_t(T,\eta)=0$. \end{definition}

\begin{proof}[Proof of Theorem \ref{T1}.] We will separately consider subcritical and critical cases.

$\bullet$ $ \textbf{Subcritical case $\displaystyle q<\frac{Q}{Q-2}$.}$
Applying H\"{o}lder's inequality in the left side of \eqref{WSF}, we obtain
\begin{equation*}\begin{split}
\int_0^T\int_{\mathbb{H}^n}u\Delta_{\mathbb{H}}\varphi_td\eta dt&\leq\int_0^T\int_{\mathbb{H}^n}|u|\varphi^{\frac{1}{q}}\varphi^{-\frac{1}{q}}|\Delta_{\mathbb{H}}\varphi_t|d\eta dt
\\&\leq\biggl(\int_0^T\int_{\mathbb{H}^n}|u|^q\varphi d\eta dt\biggr)^\frac{1}{q} \biggl(\int_0^T\int_{\mathbb{H}^n}\varphi^{-\frac{1}{q-1}}|\Delta_{\mathbb{H}}\varphi_t|^{q'}d\eta dt\biggr)^\frac{1}{q'}
\end{split}\end{equation*}
and
\begin{equation*}\begin{split}
\int_0^T\int_{\mathbb{H}^n}u\Delta_{\mathbb{H}}\varphi d\eta dt&\leq\int_0^T\int_{\mathbb{H}^n}|u|\varphi^{\frac{1}{q}}\varphi^{-\frac{1}{q}}|\Delta_{\mathbb{H}}\varphi|d\eta dt
\\&\leq\biggl(\int_0^T\int_{\mathbb{H}^n}|u|^q\varphi d\eta dt\biggr)^\frac{1}{q}\biggl(\int_0^T\int_{\mathbb{H}^n}\varphi^{-\frac{1}{q-1}}|\Delta_{\mathbb{H}}\varphi|^{q'}d\eta dt\biggr)^\frac{1}{q'}.
\end{split}\end{equation*}
Therefore, using the $\varepsilon$-Young inequality
$$XY\leq \frac{\varepsilon}{q} X^q+\frac{1}{q'\varepsilon^{q'-1}}Y^{q'},\,\, \frac{1}{q}+\frac{1}{q'}=1,\,\, X,Y,\varepsilon>0,$$
in the right-hand side of last inequalities with $\displaystyle\varepsilon=\frac{q}{4}$, we can rewrite \eqref{WSF} in the following form
\begin{equation}\label{M3}\begin{split}
\int_0^T\int_{\mathbb{H}^n}|u|^q\varphi d\eta dt&\leq 2C(q) \biggl(\int_0^T\int_{\mathbb{H}^n}\varphi^{-\frac{1}{q-1}}|\Delta_{\mathbb{H}}\varphi_t|^{q'}d\eta dt\biggr)
\\&+2C(q)\biggl(\int_0^T\int_{\mathbb{H}^n}\varphi^{-\frac{1}{q-1}}|\Delta_{\mathbb{H}}\varphi|^{q'}d\eta dt\biggr)
\\&+2\int_{\mathbb{H}^n}u_0\Delta_{\mathbb{H}}\varphi_2d\eta,
\end{split}\end{equation}where $\displaystyle C(q)=\biggl(\frac{q}{4}\biggr)^{1-q'}\frac{1}{q'}.$
\\Choosing the function $\varphi$ as in \eqref{test1} we arrive at
\begin{equation*}\begin{split}
&\int_0^T\int_{\mathbb{H}^n}\varphi^{-\frac{1}{q-1}}|\Delta_{\mathbb{H}}\varphi_t|d\eta dt=\underbrace{\biggl(\int_0^T\varphi_1^{-\frac{1}{q-1}}|\varphi'_1|^{q'} dt\biggr)}_{\mathcal{I}_2}\underbrace{\biggl(\int_{\mathbb{H}^n}\varphi_2^{-\frac{1}{q-1}}|\Delta_{\mathbb{H}}\varphi_2|^{q'}d\eta \biggr)}_{\mathcal{I}_4},
\\&\int_0^T\int_{\mathbb{H}^n}\varphi^{-\frac{1}{q-1}}|\Delta_{\mathbb{H}}\varphi|d\eta dt=\underbrace{\biggl(\int_0^T\varphi_1^{-\frac{1}{q-1}}|\varphi_1|^{q'}dt\biggr)}_{\mathcal{I}_1}\underbrace{\biggl(\int_{\mathbb{H}^n}\varphi_2^{-\frac{1}{q-1}}|\Delta_{\mathbb{H}}\varphi_2|^{q'}d\eta \biggr)}_{\mathcal{I}_4}.
\end{split}\end{equation*}

Using Lemma \ref{LT1}, we deduce that
\begin{equation*}\begin{split}
\int_0^T\int_{\mathbb{H}^n}|u|^q\varphi_1\varphi_2 d\eta dt&\leq C_5(q)T^{1-q'}R^{Q-2q'}+C_6(q)TR^{Q-2q'}
\\&+2\int_{\mathbb{H}^n}u_0\Delta_{\mathbb{H}}\varphi_2d\eta,
\end{split}\end{equation*} where $C_5(q)=2C(q)C_1C_4$ and $C_6(q)=2C(q)C_2C_4,$ respectively.

Since $u_0\in L^q(\mathbb{H}^n)$ and using the H\"{o}lder inequality in the last term, and taking account of \eqref{DIS1}, we obtain
\begin{equation*}\begin{split}
\left|\int_{\mathbb{H}^n}u_0\Delta_{\mathbb{H}}\varphi_2d\eta\right| &\leq \biggl(\int_{\mathbb{H}^n}|u_0|^qd\eta\biggr)^\frac{1}{q}\biggl(\int_{\mathbb{H}^n}|\Delta_{\mathbb{H}}\varphi_2|^{q'}d\eta\biggl)^\frac{1}{q'},\end{split}\end{equation*} with
\begin{equation}\label{O1}
\int_{\mathbb{H}^n}|\Delta_{\mathbb{H}}\varphi_2|^{q'}d\eta\leq R^{Q-2q'}\int_{\frac{1}{2}\leq|\Tilde{\eta}|_{\mathbb{H}}^2\leq1}|\Phi''(\Tilde{\eta})|+|\Phi'(\Tilde{\eta})||^{q'}d\Tilde{\eta}\leq C R^{Q-2q'},\end{equation}where $C$ does not depend on $R>0$.

As a result, we arrive at
\begin{equation*}\begin{split}
\int_0^T\int_{\mathbb{H}^n}|u|^q\varphi_1\varphi_2 d\eta dt&\leq C_5(q)T^{1-q'}R^{Q-2q'}+C_6(q)TR^{Q-2q'}
\\&+C R^{Q-2q'}\biggl(\int_{\mathbb{H}^n}|u_0|^qd\eta\biggr)^\frac{1}{q}.
\end{split}\end{equation*}
Finally, passing $R\to+\infty$ in the last inequality for $Q-2q'<0$, we deduce that
\begin{equation*}\begin{split}
\int_0^T\int_{\mathbb{H}^n}|u|^q\varphi_1(t)d\eta dt&\leq 0\,\,\,\text{for all}\,\,\,T>0.
\end{split}\end{equation*}  
Hence, we have $u(t, \eta) = 0$ almost everywhere for $\eta \in \mathbb{H}^N$ and $t \in [0, T]$. Since the time $T > 0$ is arbitrary in this case, we come to the conclusion that nontrivial local weak solutions do not exist, i.e., as a result, we come to the conclusion about instantaneous blowup of the solutions.

$\bullet$ {\bf Critical case $q=\displaystyle\frac{Q}{Q-2}$}. At this stage, we choose the test function as in \eqref{test}. Therefore, we will derive the following inequality instead of \eqref{M3}, using the same procedure as in the previous case 
\begin{equation}\label{M4}\begin{split}
\int_0^T\int_{\mathbb{H}^n}|u|^q\psi d\eta dt&\leq 2C(q) \underbrace{\biggl(\int_0^T\int_{\mathbb{H}^n}\psi^{-\frac{1}{q-1}}|\Delta_{\mathbb{H}}\psi_t|^{q'}d\eta dt\biggr)}_{\mathcal{J}_2}
\\&+2C(q)\underbrace{\biggl(\int_0^T\int_{\mathbb{H}^n}\psi^{-\frac{1}{q-1}}|\Delta_{\mathbb{H}}\psi|^{q'}d\eta dt\biggr)}_{\mathcal{J}_1}
\\&+2\int_{\mathbb{H}^n}u_0\Delta_{\mathbb{H}}\psi_2d\eta.
\end{split}\end{equation}
Applying H\"{o}lder's inequality in the last term of \eqref{M4},
\begin{equation*}\begin{split}
\left|\int_{\mathbb{H}^n}u_0\Delta_{\mathbb{H}}\psi_2d\eta\right| &\leq \biggl(\int_{\mathbb{H}^n}|u_0|^qd\eta\biggr)^\frac{1}{q}\biggl(\int_{\mathbb{H}^n}|\Delta_{\mathbb{H}}\psi_2|^{q'}d\eta\biggl)^\frac{1}{q'}\end{split}\end{equation*}and recalling \eqref{L3} with $\kappa>\frac{2q}{q-1}>2$, one obtains
\begin{equation*}\begin{split}
\int_{\mathbb{H}^n}|\Delta_{\mathbb{H}}\psi_2|^{q'}d\eta\leq C\left[\left(\ln R\right)^{-Q}+\left(\ln R\right)^{-\frac{Q}{2}}\right],     
\end{split}\end{equation*}where $C$ is a constant.

Finally, using Lemma \ref{LT3} and the last inequality in view of $u_0\in L^q(\mathbb{H}^n)$, we have
\begin{equation}\label{MM5}\begin{split}
\int_0^T\int_{\mathbb{H}^n}|u|^q\psi_1\psi_2 d\eta dt&\leq \left[\mathcal{C}_1(q)T^{\frac{2-Q}{2}}+\mathcal{C}_2(q)T+2C\right]\left[\left(\ln R\right)^{-Q}+\left(\ln R\right)^{-\frac{Q}{2}}\right],
\end{split}\end{equation}where $\mathcal{C}_1(q)=2{C}(q){C}_1$ and $\mathcal{C}_2(q)=2{C}(q){C}_2.$ 
\\Consequently, passing to the limit as $R\to+\infty$ in \eqref{MM5}, we obtain
\begin{equation*}
\int_0^T\int_{\mathbb{H}^n}|u|^q\psi_1 d\eta dt\leq 0\,\,\,\text{for all}\,\,\,T>0,\end{equation*}
which proves the absence of a local weak solution of \eqref{1}.
\end{proof}
\begin{proof}[Proof of Theorem \ref{T2}.] $\bullet$ $ \textbf{Subcritical case $\displaystyle q<\frac{Q}{Q-2}$.}$ In this case,  we choose the same test function as in the first part of Theorem \ref{T1}.
\\Repeating the procedure above, we obtain the inequality
\begin{equation}\label{Q1}\begin{split}
\int_0^T\int_{\mathbb{H}^n}|u|^q\varphi_1\varphi_2 d\eta dt&\leq 2C(q) \underbrace{\biggl(\int_0^T\varphi_1^{-\frac{1}{q-1}}|\varphi''_1|^{q'} dt\biggr)}_{\mathcal{I}_3}\underbrace{\biggl(\int_{\mathbb{H}^n}\varphi_2^{-\frac{1}{q-1}}|\Delta_{\mathbb{H}}\varphi_2|^{q'}d\eta \biggr)}_{\mathcal{I}_4}
\\&+2C(q)\underbrace{\biggl(\int_0^T\varphi_1^{-\frac{1}{q-1}}|\varphi_1|^{q'}  dt\biggr)}_{\mathcal{I}_1}\underbrace{\biggl(\int_{\mathbb{H}^n}\varphi_2^{-\frac{1}{q-1}}|\Delta_{\mathbb{H}}\varphi_2|^{q'}d\eta \biggr)}_{\mathcal{I}_4}
\\&+2\int_{\mathbb{H}^n}\biggl(u_1\Delta_{\mathbb{H}}\varphi(0,\eta)-u_0\Delta_{\mathbb{H}}\varphi_t(0,\eta)\biggr)d\eta,
\end{split}\end{equation}where $\displaystyle C(q)=\biggl(\frac{q}{4}\biggr)^{1-q'}\frac{1}{q'}.$

Consequently, from Lemma \ref{LT1} and using the H\"{o}lder inequality in the last term of \eqref{Q1} with $u_0,u_1\in L^q(\mathbb{H}^n)$ , we get
\begin{equation*}\begin{split}
&\left|\int_{\mathbb{H}^n}u_1\Delta_{\mathbb{H}}\varphi(0,\eta)d\eta\right| \leq \biggl(\int_{\mathbb{H}^n}|u_1|^qd\eta\biggr)^\frac{1}{q}\biggl(\int_{\mathbb{H}^n}|\Delta_{\mathbb{H}}\varphi_2|^{q'}d\eta\biggl)^\frac{1}{q'},
\\&\left|\int_{\mathbb{H}^n}u_0\Delta_{\mathbb{H}}\varphi_t(0,\eta)\right| \leq T^{-1} \biggl(\int_{\mathbb{H}^n}|u_0|^qd\eta\biggr)^\frac{1}{q}\biggl(\int_{\mathbb{H}^n}|\Delta_{\mathbb{H}}\varphi_2|^{q'}d\eta\biggl)^\frac{1}{q'}.
\end{split}\end{equation*}
Hence, using the \eqref{O1}, we arrive at the following.
\begin{equation*}\begin{split}
\int_0^T\int_{\mathbb{H}^n}|u|^q\varphi_1\varphi_2 d\eta dt&\leq C R^{Q-2q'}\left(T^{1-2q'}+T+1+T^{-1}\right),\end{split}\end{equation*} where $C>0$ is the arbitrary constant independent of $T$ and $R.$

Finally, passing $R\to+\infty$ into the last inequality for $Q-2q'<0$, we have
\begin{equation*}\begin{split}
\int_0^T\int_{\mathbb{H}^n}|u|^q\varphi_1d\eta dt&\leq 0\,\,\,\text{for all}\,\,\,T>0,
\end{split}\end{equation*}which proves that the problem \eqref{2} has no local weak solution for any $T>0$.

$\bullet$ {\bf Critical case $q=\displaystyle\frac{Q}{Q-2}$}. Choosing the same test function as in the second case of Theorem \ref{T1} and repeating the above technique, we obtain the following result
\begin{equation}\label{QQ1}\begin{split}
\int_0^T\int_{\mathbb{H}^n}|u|^q\psi_1\psi_2 d\eta dt&\leq 2C(q) \underbrace{\biggl(\int_0^T\int_{\mathbb{H}^n}\psi^{-\frac{1}{q-1}}|\Delta_{\mathbb{H}}\psi_{tt}|^{q'}d\eta dt\biggr)}_{\mathcal{J}_3}
\\&+2C(q)\underbrace{\biggl(\int_0^T\int_{\mathbb{H}^n}\psi^{-\frac{1}{q-1}}|\Delta_{\mathbb{H}}\psi|^{q'}d\eta dt\biggr)}_{\mathcal{J}_1}
\\&+2\int_{\mathbb{H}^n}\biggl(u_1\Delta_{\mathbb{H}}\psi(0,\eta)-u_0\Delta_{\mathbb{H}}\psi_t(0,\eta)\biggr)d\eta,
\end{split}\end{equation}where $\displaystyle C(q)=\biggl(\frac{q}{4}\biggr)^{1-q'}\frac{1}{q'}.$

Moreover, taking into account the result of  Lemma \ref{LT3}, recalling $u_0,u_1\in L^q(\mathbb{H}^n)$ and using the H\"{o}lder inequality in the last term of \eqref{QQ1}, we have
\begin{equation*}\begin{split}
\left|\int_{\mathbb{H}^n}u_1\Delta_{\mathbb{H}}\psi(0,\eta)d\eta\right| \leq \biggl(\int_{\mathbb{H}^n}|u_1|^qd\eta\biggr)^\frac{1}{q}\biggl(\int_{\mathbb{H}^n}|\Delta_{\mathbb{H}}\psi_2|^{q'}d\eta\biggl)^\frac{1}{q'},\end{split}\end{equation*}
and
\begin{equation*}\begin{split}\left|\int_{\mathbb{H}^n}u_0\Delta_{\mathbb{H}}\psi_t(0,\eta)\right| \leq T^{-1}\biggl(\int_{\mathbb{H}^n}|u_0|^qd\eta\biggr)^\frac{1}{q}\biggl(\int_{\mathbb{H}^n}|\Delta_{\mathbb{H}}\psi_2|^{q'}d\eta\biggl)^\frac{1}{q'}.
\end{split}\end{equation*}
Furthermore, in view of Lemma \ref{LT3} with $\kappa>2q$ we have
\begin{align*}
\int_{\mathbb{H}^n}|\Delta_{\mathbb{H}}\psi_2|^{q'}d\eta&\leq C\left[\left(\ln R\right)^{-Q}+\left(\ln R\right)^{-\frac{Q}{2}}\right].\end{align*}
Consequently, we have the following inequality
\begin{equation*}\begin{split}
&\int_0^T\int_{\mathbb{H}^n}|u|^q\psi_1\psi_2 d\eta dt
\leq C\left[T^{1-Q}+T+1+T^{-1}\right]
\left[\left(\ln R\right)^{-Q}+\left(\ln R\right)^{-\frac{Q}{2}}\right],
\end{split}\end{equation*} where $C>0$ is the arbitrary constant independent of $T$ and $R.$

Then, passing $R\to+\infty$, we deduce that
\begin{equation*}\label{M8}\begin{split}
\int_0^T\int_{\mathbb{H}^n}|u|^q\psi_1d\eta dt&\leq 0\,\,\,\text{for all}\,\,\,T>0,
\end{split}\end{equation*}which proves that \eqref{2} has no local weak solution for any $T>0$. 
\end{proof}

\section*{Declaration of competing interest}
	The authors declare that there is no conflict of interest.

\section*{Data Availability Statements} The manuscript has no associated data.

\section*{Acknowledgments}
This research has been funded by the Science Committee of the Ministry of Education and Science of the Republic of Kazakhstan (Grant No. AP14869090), by the FWO Odysseus 1 grant G.0H94.18N: Analysis and Partial Differential Equations and by the Methusalem programme of the Ghent University Special Research Fund (BOF) (Grant number 01M01021). Michael Ruzhansky is also supported by EPSRC grant EP/R003025/2 and EP/V005529/1.

\end{document}